\newtheorem{thm}{Theorem}
\newtheorem{lemma}[thm]{Lemma}
\newtheorem{ex}[thm]{Example}
\newtheorem{rmk}[thm]{Remark}
\theoremstyle{definition}
\DeclareMathOperator{\ord}{ord}
\DeclareMathOperator{\Log}{Log}
\DeclareMathOperator{\Tr}{Tr}
\newcommand{\Z}{\mathbb{Z}}
\newcommand{\Q}{\mathbb{Q}}
\newcommand{\C}{\mathbb{C}}
\newcommand{\N}{\mathbb{N}}
\newcommand{\M}{\mathcal{M}}
\newcommand{\T}{\mathcal{T}}
\newcommand{\cQ}{\mathcal{Q}}
\renewcommand{\i}{\infty}
\renewcommand{\t}{\tau}
\renewcommand{\H}{\mathbb{H}}
\newcommand{\G}{\Gamma}
\newcommand{\g}{\gamma}
\renewcommand{\c}{\mathcal{C}}
\newcommand{\SL}{\mathrm{SL}_{2}}
\newcommand{\RNum}[1]{\uppercase\expandafter{\romannumeral #1\relax}}
\newcommand{\rnum}[1]{\lowercase\expandafter{\romannumeral #1\relax}}
\renewcommand{\div}{{\rm div}}
\numberwithin{equation}{section}
\numberwithin{thm}{section}
\begin{document}

\title{Multiplicative Hecke operators and their applications \MakeUppercase{\romannumeral2}}

\author[Chang Heon Kim and Gyucheol Shin]{Chang Heon Kim and Gyucheol Shin$^{*}$}

\address{Department of Mathematics, Sungkyunkwan University, Suwon 16419, Korea}
\email{chhkim@skku.edu}
\address{Department of Mathematics, Sungkyunkwan University, Suwon 16419, Korea}
\email{sgc7982@gmail.com}

\begin{abstract}
Inspired by Borcherds' questions \cite[17.10]{Bor}, Guerzhoy constructed a new type of Hecke operators $\T(p)$, called the multiplicative Hecke operators, which acts on the space of meromorphic modular forms on the full modular group $\SL(\Z)$. In \cite{KS}, this result was extended in two directions: to higher levels and to $\T(n)$ with a positive integer $n$. In this paper, building on the results in \cite{KS}, we further generalize the result in another direction by considering alternative infinite product expansions of meromorphic modular forms. As an application, we demonstrate how multiplicative Hecke operators relate both the divisor of modular forms and traces of singular moduli. Additionally, we prove the existence of a modular form with nonintegral coefficients whose poles or zeros are only supported at the cusps and which is not a multiplicative Hecke eigenform.
\end{abstract}

\maketitle

\renewcommand{\thefootnote}%
             {}
 {\footnotetext{
2010 {\it Mathematics Subject Classification}: 11F03, 11F25, 11F30
 \par
 {\it Keywords}: Borcherds product, meromorphic modular forms, Hecke operators, multiplicative Hecke operators, traces of singular moduli,

This work was supported by the National Research Foundation of Korea(NRF) grant 
funded by the Korea government(MSIT)(RS-2024-00348504).}

\section{Introduction and statement of results}
Let
\begin{equation*}
\G_{0}(N):=\{\begin{psmallmatrix}a&b\\c&d\end{psmallmatrix}\in\SL(\Z):c\equiv0\pmod{N}\}.
\end{equation*}
A meromorphic function on $\H^{*}:=\{\t\in\C:{\rm Im}(\t)>0\}\cup \mathbb{P}^{1}(\Q)$ is called a \textit{meromorphic modular form of weight $k$ on $\G_{0}(N)$} if it satisfies $(f|_{k}\g)(\t):=(c\t+d)^{-k}f(\g\t)=f(\t)$ for all $\g=\begin{psmallmatrix}a&b\\c&d\end{psmallmatrix}\in\G_{0}(N)$ where $\g\t:=\frac{a\t+b}{c\t+d}$.
Hecke operators are linear operators acting on the space of modular forms on $\G_{0}(N)$. For each prime $p$ and modular form $f$ of weight $k$ on $\G_{0}(N)$, the (usual) Hecke operator $T(p)$ is defined by
\begin{equation*}
f|T(p):=
\begin{cases}
p^{k/2-1}\sum\limits_{\substack{ad=p\\0\leq b<d}}f|_{k}\begin{psmallmatrix}a&b\\0&d\end{psmallmatrix}, & \text{ if } p\nmid N,
\\
p^{k/2-1}\sum\limits_{j=0}^{p-1}f|_{k}\begin{psmallmatrix}1&j\\0&p\end{psmallmatrix}, & \text{ if } p|N.
\end{cases}
\end{equation*}

Inspired by Borcherds' questions \cite[17.10]{Bor}, Guerzhoy \cite{Guer} introduced a new class of Hecke operators $\T(p)$ (for prime $p$) on $\SL(\Z)$, called the \textit{multiplicative Hecke operator}, acting on the multiplicative group of meromorphic modular forms on $\SL(\Z)$. He showed that the Borcherds isomorphism commutes with the multiplicative Hecke operators and the usual Hecke operators acting on the space of modular forms of weight 1/2 on $\G_{0}(4)$. More recently, in \cite{JKK}, Jeon, Kang and the first author generalized Guerzhoy's result \cite[Theorem 1]{Guer} to higher level cases. They proved that the generalized Borcherds products defined in \cite[Theorem 6.1]{BO} commute with the multiplicative Hecke operators $\T(p)$ and the usual Hecke operators on half integral weight vector-valued harmonic weak Maass forms. In addition, they showed that the logarithmic derivative also commutes with the multiplicative Hecke operators and the usual Hecke operators acting on the space of meromorphic modular forms of weight 2. These results, however, were established only for primes $p$ not dividing the level $N$. In our previous work \cite{KS}, we extended it to $\T(n)$ for all positive integers $n$ and to higher levels $N$ simultaneously. To be more precise, let $N$ be a positive integer, and let $\M(N)$ denote the multiplicative group of meromorphic modular forms on $\G_{0}(N)$ with a unitary multiplier system, integer Fourier coefficients, and leading coefficient 1. We let $\M_{k,h}(N)$ denote the subset of $\M(N)$ consisting of meromorphic modular forms of weight $k$ satisfying $v_{\i}(f)=h$. Here $v_{\i}(f)$ denotes the vanishing order of $f$ at the cusp $\i$. Following \cite{KS} for each prime $p$ and $f\in\M_{k,h}(N)$, the multiplicative Hecke operator $\T(p)$ acting on $\M(N)$ is defined by 
\begin{equation*}
f|\T(p):=
\begin{cases}
\mu\prod\limits_{\substack{ad=p\\0\leq b<d}}f|_{k}\begin{psmallmatrix}a&b\\0&d\end{psmallmatrix}, & \text{ if } p\nmid N,
\\
\mu\prod\limits_{j=0}^{p-1}f|_{k}\begin{psmallmatrix}1&j\\0&p\end{psmallmatrix}, & \text{ if } p|N,
\end{cases}
\end{equation*}
where $\mu$ is a constant, depending on $k$, $p$, and a multiplier system of $f$, chosen such that the leading coefficient of $f|\T(p)$ is equal to one. In \cite{KS}, we showed that this new type of Hecke operator can be naturally extended to $\T(n)$, where $n\in\N$, similar to the case of the usual Hecke operators (for more details, see \cite[Theorem 1.2 and Definition 1.6]{KS}). Furthermore, we computed the explicit expression of the exponents in the infinite product expansion of $f|\T(n)$. Moreover, we showed that the following diagram is commutative, which completely covers the gap between Guerzhoy's answer \cite[Theorem 1]{Guer} and Borcherds' question \cite[17.10]{Bor}.

\begin{equation}\label{comm}
\begin{tikzcd}
H_{1/2,\tilde{\rho}_{N}}'\arrow{r}{B}\arrow{d}{nT_{1/2}(n^{2})}&\M^{H}(N)\arrow{r}{\mathcal{D}}\arrow{d}{\T(n)}&M_{2}^{mer}(N)\arrow{d}{T_{2}(n)}
\\
H_{1/2,\tilde{\rho}_{N}}'\arrow{r}{B}&\M^{H}(N)\arrow{r}{\mathcal{D}}&M_{2}^{mer}(N)
\end{tikzcd}
\end{equation}

For the numerous of notations appearing in the above diagram, we refer to \cite{JKK} or \cite{KS}.
\\

\subsection{Multiplicative Hecke operators and alternative infinite product expansions of modular forms}
\indent The first purpose of this paper is, as a subsequent work to \cite{KS}, to provide another description of multiplicative Hecke operators by utilizing an alternative expression for the infinite product expansion of modular forms. Before stating our results, following \cite{Zagier}, we first introduce the notion of an alternative expression of the infinite product expansion of modular forms. For a modular form $f$, while its infinite product expansion is usually written as $f=q^{v_{\i}(f)}\prod_{n=1}^{\i}(1-q^{n})^{c(n)}$, Zagier introduced a modified infinite product expansion of modular form in order to study twisted traces of singular moduli of the elliptic modular $j$-function. To be more precise, let $D$ be a positive discriminant and $-d$ a negative discriminant. We assume that $D$ and $-d$ to be congruent to a square modulo $4N$. Let $\cQ_{d,N}:=\{[a,b,c]=ax^2+bxy+cy^2\in\cQ_{d}:a\equiv0\pmod{N}\}$, with the usual action of the congruence subgroup $\G_{0}(N)$ where $\cQ_{d}$ is the set of positive definite integral binary quadratic forms of discriminant $-d$. For each $Q\in\cQ_{d,N}$, we associate its unique root $\alpha_{Q}\in\H$. Then for $Q=[Na,b,c]\in\cQ_{dD,N}$, the genus character $\chi_{D}$ is defined as 
\begin{equation*}
\chi_{D}(Q):=
\begin{cases}
\big(\frac{\Delta}{n}\big), & \text{ if $\Delta|b^{2}-4Nac$ and $\frac{b^{2}-4Nac}{\Delta}$ is a square modulo $4N$ and $(a,b,c,\Delta)=1$},
\\
0, & \text{ otherwise},
\end{cases}
\end{equation*}
Here $n$ is any integer prime to $D$ represented by one of the quadratic forms $[N_{1}a,b,N_{2}c]$ with $N_{1}N_{2}=N$ and $N_{1},N_{2}>0$ (see \cite[\S 1.2]{GKZ} or \cite[\S 4]{BO}). Zagier \cite[Theorem 7]{Zagier} defined the rational function $\mathcal{H}_{D,d}(X)$ by
\begin{equation*}
\mathcal{H}_{D,d}(X)=\prod\limits_{Q\in\cQ_{Dd}/\G(1)}(X-j(\alpha_{Q}))^{\chi_{D}(Q)},
\end{equation*}
and showed that
\begin{equation*}
\mathcal{H}_{D,d}(j(\t))=\prod\limits_{n=1}^{\i}P_{D}(q^{n})^{A(n^{2}D,d)},
\end{equation*}
where
\begin{equation*}
P_{D}(t):=\exp\bigg(-\sqrt{D}\sum\limits_{r=1}^{\i}\Big(\frac{D}{r}\Big)\frac{t^{r}}{r}\bigg)=
\begin{cases}
\prod\limits_{0<n<D}(1-\zeta_{D}^{n}t)^{(\frac{D}{n})} &\; \text{ if $D>1$}
\\
1-t &\; \text{ if $D=1$}
\end{cases}
\in\Q(\sqrt{D})(t),
\end{equation*}
$\zeta_{D}:=e^{2\pi i/D}$, $(\frac{*}{*})$ is the Kronecker symbol, and $A(D,d)$ is the $D$th Fourier coefficient of $f_{d}=q^{-d}+O(q)$ in the space of weakly holomorphic modular forms of weight 1/2 on $\G_{0}(4)$ satisfying Kohnen plus condition.

\indent Later in order to describe the generalized Borcherds product explicitly, for instance \cite{BO,BS}, this alternative infinite product expansion of modular forms was employed. In a similar fashion, we adopt Zagier's idea of expressing modular forms via this type of infinite product expansion and apply it to all meromorphic modular forms. Using the series $P_{D}$, we now state our first result, which resolves convergence issues that may arise from the infinite product expansion of meromorphic modular forms.

\begin{thm}\label{first}
Suppose $D$ is a positive fundamental discriminant. Let
\begin{equation*}
f(\t)=q^{h}(1+\sum\limits_{n=1}^{\i}a(n)q^{n})
\end{equation*}
be a meromorphic modular form of weight $k$ on $\G_{0}(N)$. Then the following assertions are true.
\begin{enumerate}
\item 
There are uniquely determined complex numbers $c(D,n)$ satisfying
\begin{equation}\label{cDn}
f(\t)=q^{h}\prod\limits_{n=1}^{\i}P_{D}(q^{n})^{c(D,n)}.
\end{equation}
Here, we let $a^{b}=\exp(b\Log(a))$ where $\Log$ denotes the principal branch of the complex logarithm.

\item For each $n\geq1$, $c(D,n)$ is computed recursively from the values of $a(n)$ via the relation
\begin{equation}\label{recur}
c(D,n)=-\frac{1}{\sqrt{D}}a(n)-\frac{1}{n\sqrt{D}}\bigg(\sum\limits_{\substack{d|n\\d\ne n}}d\sqrt{D}c(D,d)\Big(\frac{D}{n/d}\Big)+\sum\limits_{1\leq u<n}a(n-u)\sum\limits_{d|u}d\sqrt{D}c(D,d)\Big(\frac{D}{u/d}\Big)\bigg).
\end{equation}
Moreover, the value
\begin{equation}\label{Dindependence}
\sum\limits_{u|n}u\sqrt{D}c(D,u)\Big(\frac{D}{n/u}\Big)
\end{equation}
is independent of $D$.
\end{enumerate}
\end{thm}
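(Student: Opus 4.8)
The plan is to pass to logarithms and reduce the whole statement to a lower-triangular system of linear equations on the numbers $c(D,n)$. Write $g(\t):=f(\t)/q^{h}=1+\sum_{n\ge1}a(n)q^{n}$. Since $f$ is meromorphic at the cusp $\i$, the series $g$ is holomorphic and nonvanishing in a punctured disc about $q=0$, and as $g(0)=1$ it extends holomorphically and without zeros to a full disc $|q|<R$; hence $\Log g=\sum_{n\ge1}b(n)q^{n}$ converges there, and the $b(n)$ depend only on $f$, not on $D$. From the definition of $P_{D}$ one has $\Log P_{D}(t)=-\sqrt{D}\sum_{r\ge1}\big(\tfrac{D}{r}\big)\tfrac{t^{r}}{r}$, and the Kronecker symbol $\big(\tfrac{D}{\cdot}\big)$ is completely multiplicative with $\big(\tfrac{D}{1}\big)=1$. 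Expanding the logarithm of the right-hand side of \eqref{cDn} and reading off the coefficient of $q^{m}$, the identity \eqref{cDn} becomes equivalent to
\begin{equation*}
-m\,b(m)=\sum_{d\mid m}\big(d\sqrt{D}\,c(D,d)\big)\Big(\tfrac{D}{m/d}\Big)\qquad(m\ge1).
\end{equation*}

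Because $\big(\tfrac{D}{\cdot}\big)$ is completely multiplicative with value $1$ at $1$, this system is lower triangular in $m$, so Möbius inversion gives the closed form $m\sqrt{D}\,c(D,m)=-\sum_{d\mid m}d\,b(d)\,\mu(m/d)\big(\tfrac{D}{m/d}\big)$; this proves both the existence and the uniqueness asserted in (1). The only point that is not purely formal is the convergence of the product in \eqref{cDn} and the legitimacy of taking its logarithm term by term under the convention $a^{b}=\exp(b\Log a)$; I expect this to be the main obstacle. It is dispatched by the same closed form: since $|b(d)|^{1/d}$ is bounded and the number of divisors of $m$ grows subexponentially, $\limsup_{m}|c(D,m)|^{1/m}<\i$, so $\sum_{n}|c(D,n)|\,|q|^{n}$ converges near $q=0$; as moreover $P_{D}(q^{n})\to1$ as $n\to\i$, no branch cut is crossed and $\Log\prod_{n}P_{D}(q^{n})^{c(D,n)}=\sum_{n}c(D,n)\Log P_{D}(q^{n})$ holds on a neighbourhood of the cusp, which legitimises the formal computation.

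For part (2) I would set $S(m):=\sum_{d\mid m}d\sqrt{D}\,c(D,d)\big(\tfrac{D}{m/d}\big)$, so that the displayed system reads $S(m)=-m\,b(m)$, and combine it with the standard recursion obtained by logarithmically differentiating $g$, namely $n\,b(n)=n\,a(n)-\sum_{1\le u<n}a(n-u)\,u\,b(u)$. Isolating the leading term $n\sqrt{D}\,c(D,n)$ in $S(n)=n\sqrt{D}\,c(D,n)+\sum_{d\mid n,\,d\ne n}d\sqrt{D}\,c(D,d)\big(\tfrac{D}{n/d}\big)$ and substituting $u\,b(u)=-S(u)=-\sum_{d\mid u}d\sqrt{D}\,c(D,d)\big(\tfrac{D}{u/d}\big)$ rearranges directly into \eqref{recur}. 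Finally, the quantity in \eqref{Dindependence} is exactly $S(n)=-n\,b(n)$, and since $b(n)$ is a Taylor coefficient of $\Log(f/q^{h})$ it carries no dependence on $D$; this is the claimed $D$-independence.
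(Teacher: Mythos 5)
Your proof is correct and follows essentially the same route as the paper's: pass to logarithms, compare $q$-coefficients to get the triangular system $\sum_{d\mid m} d\sqrt{D}\,c(D,d)\big(\tfrac{D}{m/d}\big)=-m\,b(m)$, and combine it with the standard recursion for the coefficients of $\Log(f q^{-h})$ to recover \eqref{recur}. The differences are only cosmetic: you work directly from the exponential-series definition of $P_{D}$ (so you bypass the roots-of-unity product and the Gauss-sum identity $\sqrt{D}\big(\tfrac{D}{r}\big)=\sum_{0<m<D}\big(\tfrac{D}{m}\big)\zeta_{D}^{mr}$ used in the paper), and you obtain the $D$-independence of \eqref{Dindependence} at once by identifying it with $-n\,b(n)$ instead of by induction, which is a slight streamlining rather than a different argument.
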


\begin{ex}
Let $D=8$ and $d=3$. The set of representatives of $\cQ_{24}/\G(1)$ is given by $\{[1,0,6],[2,0,3]\}$. Meanwhile, we clearly have $P_{8}(t)=\frac{(1-\zeta_{8}t)(1-\zeta_{8}^{7}t)}{(1-\zeta_{8}^{3}t)(1-\zeta_{8}^{5}t)}$. Therefore, we have
\begin{align*}
\mathcal{H}_{8,3}(j(\t))&=\frac{j(\t)-j(\alpha_{[1,0,6]})}{j(\t)-j(\alpha_{[2,0,3]})}=\frac{j(\t)-2417472-1707264\sqrt{2}}{j(\t)-2417472+1707264\sqrt{2}}
\\
&=1-3414528\sqrt{2}q+(11659001462784-8251985424384\sqrt{2})q^{2}
\\
&+(56353270574302101504-39847797031793227776\sqrt{2})q^{3}+\cdots
\\
&=1+\sum\limits_{n=1}^{\i}a(n)q^{n}=\prod\limits_{n=1}^{\i}\bigg(\frac{1-\sqrt{2}q^{n}+q^{2n}}{1+\sqrt{2}q^{n}+q^{2n}}\bigg)^{c(8,n)}.
\end{align*}
In agreement with \eqref{recur}, we have
\begin{align*}
&c(8,1)=-\frac{a(1)}{\sqrt{8}}=1707264, \;\; c(8,2)=-\frac{a(2)}{\sqrt{8}}+\frac{a(1)^{2}}{2\sqrt{8}}=4125992712192, 
\\
&c(8,3)=-\frac{a(3)}{\sqrt{8}}-\frac{a(1)-3a(1)a(2)+a(1)^{3}}{3\sqrt{8}}=13288900691444361984, \cdots,
\end{align*}
which agrees with the results in \cite[Theorem 7]{Zagier}.
\end{ex}

Given another infinite product expansion of a modular form, the action of multiplicative Hecke operators yields not a simple formula for Fourier coefficients of a modular form but rather a natural formula for exponents in its infinite product expansion. It is therefore natural to ask how to compute the exponents in the infinite product expansion of $f|\T(p)$ when $f$ is expressed in the form \eqref{cDn}. The following is a generalization of \cite[Theorem 1.2]{KS}.

\begin{thm}\label{exponents}
Suppose $D$ is a positive fundamental discriminant. Suppose that 
\begin{equation*}
f(\t)=q^{h}\prod\limits_{n=1}^{\i}P_{D}(q^{n})^{c(D,n)}
\end{equation*}
is a meromorphic modular form of weight $k$ on $\G_{0}(N)$. Then the action of multiplicative Hecke operators on $\M(N)$ is given by
\begin{equation*}
f(\t)|\T(p)=q^{h\beta(p)}\prod\limits_{n=1}^{\i}P_{D}(q^{n})^{c_{p}(D,n)},
\end{equation*}
where
\begin{equation*}
\beta(p):=
\begin{cases}
\sigma(p) & \text{ if } p\nmid N,
\\
1 & \text{ if } p|N,
\end{cases}
\end{equation*}
and 
\begin{equation}\label{cpDn}
c_{p}(D,n):=
\begin{cases}
c(D,\frac{n}{p})+pc(D,pn)+\big(\frac{D}{p}\big)\chi_{p}(n)c(D,n) & \text{ if } p\nmid N,
\\
pc(D,pn)+\big(\frac{D}{p}\big)\chi_{p}(n)c(D,n) & \text{ if } p|N.
\end{cases}
\end{equation}
Here, $\sigma(n)=\sum_{d|n}d$ is the sum of the divisors of $n$ and $\chi_{p}$ denotes the trivial Dirichlet character modulo $p$. Moreover, we have
\begin{equation}\label{cpr}
c_{p^{t}}(D,n)=c_{p^{t-1}}(D,n/p)+pc_{p^{t-1}}(D,pn)+\bigg(\frac{D}{p}\bigg)\chi_{p}(n)c_{p^{t-1}}(D,n)-pc_{p^{t-2}}(D,n),
\end{equation}
and
\begin{equation}\label{crs}
c_{rs}(D,n)=c_{r}(D,n)c_{s}(D,n) \;\; \text{ if } {\rm gcd}(r,s)=1.
\end{equation}
\end{thm}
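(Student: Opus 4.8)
The plan is to prove \eqref{cpDn} by computing $f|\T(p)$ directly from its definition, substituting the product expansion \eqref{cDn}, and reading off the exponents; the identities \eqref{cpr} and \eqref{crs} then follow formally from the structural relations for $\T(n)$ already established in \cite{KS}. The computation hinges on a single elementary identity for how $P_{D}$ interacts with the substitution $t\mapsto\zeta_{p}^{j}t$. Since $\log P_{D}(t)=-\sqrt{D}\sum_{r\geq1}\big(\frac{D}{r}\big)\frac{t^{r}}{r}$, summing over $j=0,\dots,p-1$ and using that $\sum_{j=0}^{p-1}\zeta_{p}^{jr}$ equals $p$ if $p\mid r$ and $0$ otherwise yields, for each $n\geq1$,
\[
\prod_{j=0}^{p-1}P_{D}(\zeta_{p}^{nj}t)=
\begin{cases}
P_{D}(t)^{p}, & p\mid n,\\
P_{D}(t^{p})^{(\frac{D}{p})}, & p\nmid n,
\end{cases}
\]
where every power is taken with the principal branch (each side is $\exp$ of a power series vanishing at $t=0$), and $(\frac{D}{p})$ is read as $0$ when $p\mid D$. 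This is the only nonformal step; everything else is bookkeeping.

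Suppose first $p\nmid N$. By definition $f|\T(p)$ equals a constant $\mu$ — a power of $p$ times a root of unity depending only on $p$, $k$ and the multiplier system of $f$, pinned down by normalizing the leading coefficient to $1$ — times $\prod_{j=0}^{p-1}f\big(\frac{\tau+j}{p}\big)\cdot f(p\tau)$. Writing $q^{1/p}=e^{2\pi i\tau/p}$ and substituting \eqref{cDn}, the factor $\prod_{j=0}^{p-1}f\big(\frac{\tau+j}{p}\big)$ becomes $\zeta_{p}^{hp(p-1)/2}q^{h}\prod_{n\geq1}\big(\prod_{j=0}^{p-1}P_{D}(\zeta_{p}^{jn}q^{n/p})\big)^{c(D,n)}$; applying the identity above (with $t=q^{n/p}$, so $t^{p}=q^{n}$) and re-indexing $n\mapsto pn$ in the terms with $p\mid n$, this equals a root of unity times $q^{h}\prod_{n\geq1}P_{D}(q^{n})^{pc(D,pn)+(\frac{D}{p})\chi_{p}(n)c(D,n)}$, while $f(p\tau)=q^{ph}\prod_{n\geq1}P_{D}(q^{n})^{c(D,n/p)}$ with $c(D,n/p)=0$ for $p\nmid n$. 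Multiplying and absorbing the leftover constant into $\mu$, the $q$-order is $h(p+1)=h\sigma(p)=h\beta(p)$ and the exponent of $P_{D}(q^{n})$ is precisely $c_{p}(D,n)$ of \eqref{cpDn}. Since $f|\T(p)\in\M(N)$ by \cite{KS}, Theorem \ref{first}(1) applies and this is its $P_{D}$-expansion. For $p\mid N$ the factor $f(p\tau)$ is absent, so the $c(D,n/p)$ summand drops out and the $q$-order becomes $h=h\beta(p)$.

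Finally, \eqref{cpr} and \eqref{crs} are obtained by substituting \eqref{cDn} into the structural identities for the multiplicative Hecke operators from \cite{KS} — namely $\T(rs)=\T(r)\T(s)$ for $\gcd(r,s)=1$, and the recursion which, read multiplicatively, states $f|\T(p^{t})\cdot\big(f|\T(p^{t-2})\big)^{p}=\big(f|\T(p^{t-1})\big)|\T(p)$ — and comparing exponents, using that the exponents of a $P_{D}$-product add under multiplication of forms and scale under taking powers. Applying the already-proved prime-case formula \eqref{cpDn} to $f|\T(p^{t-1})$ and collecting exponents gives \eqref{cpr}, and treating $f|\T(rs)=\big(f|\T(r)\big)|\T(s)$ in the same way gives \eqref{crs}.

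The main obstacle is genuinely confined to the prime case: one must verify the root-of-unity identity, control the normalizing constant $\mu$, and keep track of the principal-branch conventions built into \eqref{cDn} together with the degenerate cases $p\mid D$ and $p=2$. A secondary point of care is that, since $f$ is only meromorphic, the product \eqref{cDn} is not a convergent infinite product but an equality of meromorphic functions in the sense of Theorem \ref{first}(1); this causes no trouble, because all of the above takes place at the level of $q$-expansions at the cusp $\infty$, where the slash operators and the exponentiation $a^{b}=\exp(b\Log a)$ are unambiguous. (Alternatively one could pass to the logarithmic derivative, whose $n$-th Fourier coefficient is $-\sqrt{D}\sum_{u\mid n}u\,c(D,u)\big(\frac{D}{n/u}\big)$ — $D$-independent by Theorem \ref{first}(2) — and invoke the compatibility of $\T(p)$ with the weight-$2$ Hecke operator from \cite{KS}; but the direct argument above is self-contained.)
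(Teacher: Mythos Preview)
Your proof is correct and follows the same overall architecture as the paper's: compute the prime case directly from the definition of $\T(p)$, then deduce \eqref{cpr} and \eqref{crs} from the structural identities $\T(p^{t-1})\T(p)=\T(p^{t})\cdot\T(p^{t-2})^{p}$ and $\T(r)\T(s)=\T(rs)$ of \cite{KS}. The difference lies in the mechanics of the prime computation. The paper expands $P_{D}(t)=\prod_{0<m<D}(1-\zeta_{D}^{m}t)^{(\frac{D}{m})}$ and collapses the product over $j$ via $\prod_{j=0}^{p-1}(1-\zeta_{p}^{j}X)=1-X^{p}$; after this it must still translate $\zeta_{D}^{pm}$ back to $\zeta_{D}^{m}$, which forces a case split on $(p,D)$ and, when $p\mid D$, a separate lemma showing that the sum $\sum_{i=0}^{p-1}\big(\tfrac{D}{m/p+iD/p}\big)$ vanishes. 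Your route computes $\sum_{j}\log P_{D}(\zeta_{p}^{nj}t)$ directly from the exponential-series definition of $P_{D}$ and the orthogonality relation $\sum_{j}\zeta_{p}^{jr}=p\cdot\mathbf{1}_{p\mid r}$, obtaining the closed formula $\prod_{j}P_{D}(\zeta_{p}^{nj}t)=P_{D}(t^{p})^{(\frac{D}{p})}$ (for $p\nmid n$) in one line; the case $p\mid D$ is absorbed for free since $\big(\tfrac{D}{p}\big)=0$ and the Kronecker symbol is completely multiplicative in the denominator. So your argument is a genuine simplification: it bypasses the paper's auxiliary lemma and the $(p,D)$ case analysis, at the cost of nothing beyond a more careful statement of the branch conventions (which you already address). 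The handling of \eqref{cpr} and \eqref{crs} is identical in both.
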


\begin{ex}
Let $D=8$. Consider the modular form of weight 2 on $\G_{0}(11)$
\begin{align*}
f(\t)&=-\frac{1}{10}(E_{2}(\t)-11E_{2}(11\t)+24\eta(\t)^{2}\eta(11\t)^{2})=1+\sum\limits_{n=2}^{\i}a(n)q^{n}
\\
&=\prod\limits_{n=1}^{\i}\bigg(\frac{1-\sqrt{2}q^{n}+q^{2n}}{1+\sqrt{2}q^{n}+q^{2n}}\bigg)^{c(8,n)},
\end{align*}
where $E_{2}(\t)=1-24\sum_{n=1}^{\i}\sigma(n)q^{n}$ is the Eisenstein series of weight 2 and $\eta(\t)$ is the Dedekind eta function. For example, we can compute $f|\T(9)$ as follows: First, by using \eqref{recur}, we obtain $c(D,n)$ from $a(n)$'s. Then, using  \eqref{cpDn} yields
\begin{align*}
(f|\T(3))(\t)&=\bigg(\frac{1-\sqrt{2}q+q^{2}}{1+\sqrt{2}q+q^{2}}\bigg)^{-9\sqrt{2}}\bigg(\frac{1-\sqrt{2}q^{2}+q^{4}}{1+\sqrt{2}q^{2}+q^{4}}\bigg)^{-288\sqrt{2}}
\\
&\times\bigg(\frac{1-\sqrt{2}q^{3}+q^{6}}{1+\sqrt{2}q^{3}+q^{6}}\bigg)^{11742\sqrt{2}}\cdots.
\end{align*}
Finally, from \eqref{cpr}, we obtain
\begin{align*}
(f|\T(9))(\t)&=\bigg(\frac{1-\sqrt{2}q+q^{2}}{1+\sqrt{2}q+q^{2}}\bigg)^{35235\sqrt{2}}\bigg(\frac{1-\sqrt{2}q^{2}+q^{4}}{1+\sqrt{2}q^{2}+q^{4}}\bigg)^{1134001917\sqrt{2}}
\\
&\times\bigg(\frac{1-\sqrt{2}q^{3}+q^{6}}{1+\sqrt{2}q^{3}+q^{6}}\bigg)^{43213358093067\sqrt{2}}\cdots.
\end{align*} 
\end{ex}

\subsection{Divisor of modular forms and multiplicative Hecke operators}
The exponents in the infinite product expansion of a meromorphic modular form are closely related to its divisors. In \cite{BKO}, it was shown that for a given meromorphic modular form $f$, the values of the Faber polynomials $J_{m}(\t)$ at the zeros or poles of $f$ in the fundamental domain of $\SL(\Z)$ can be expressed as a $\Q$-linear combination of the exponents in the infinite product expansion of $f$ and the sum of divisors functions. Throughtout this article, by the \textit{divisor formula} we mean the relation given in \cite[Theorem 5]{BKO}. To state our next results, which describe how multiplicative Hecke operators fit into the framework of divisors of modular forms, we first introduce some auxiliary notions. At first, we brifely recall the definition of the modular functions $j_{N,n}(\t)$ on $\G_{0}(N)$, which will be used later. For $s\in\C$ and $m\geq0$, we define
\begin{equation*}
\phi_{m}(v,s):=
\begin{cases}
2\pi\sqrt{mv}I_{s-1/2}(2\pi mv) & \text{ if $m>0$},
\\
v^{s} & \text{ if $m=0$},
\end{cases}
\end{equation*}
where $I_{s}$ is the $I$-Bessel function. For ${\rm Re}(s)>1$, the \textit{Niebur-Poincar\'e series} $F_{N,-m}(\t,s)$ is defined by
\begin{equation*}
F_{N,-m}(\t,s):=\sum\limits_{\g\in\G_{0}(N)_{i\i}\backslash\G_{0}(N)}\phi_{m}(v,s)q^{-mu}|_{0}\g,
\end{equation*}
where $\G_{0}(N)_{i\i}=\{\pm\begin{psmallmatrix}1&n\\0&1\end{psmallmatrix}:n\in\Z\}$. Various properties of $F_{N,-m}$ have been discussed in \cite{Nie}. The function $j_{N,n}(\t)$ is defined by the coefficient in the Laurent expansion of the Niebur-Poincar\'e series at $s=1$. This is a standard example of a \textit{polyharmonic Maass form}.
Next, for each function $f:X_{0}(N)\rightarrow\C$, we define a map $\mathcal{D}_{f}:{\rm Div}(X_{0}(N))\rightarrow\C$ by
\begin{equation*}
\mathcal{D}_{f}(D):=\sum\limits_{[z]\in X_{0}(N)}n_{z}f(z)
\end{equation*}
for $D=\sum_{[z]\in X_{0}(N)}n_{z}[z]$. The quantity $\mathcal{D}_{f}(\div(g))$ is called the \textit{Rohrlich-type divisor sum}. For further details, we refer to \cite{BK,JKKMd,JKKMu,Roh}.

\begin{thm}\label{Rohrlich}
Let
\begin{equation*}
f(\t)=q^{h}\prod\limits_{n=1}^{\i}P_{D}(q^{n})^{c(D,n)}
\end{equation*}
be a meromorphic modular form of weight $k$ on $\G_{0}(N)$. Then the following are true:
\begin{enumerate}
\item we have
\begin{equation*}
\sum\limits_{u|n}u\sqrt{D}c(D,u)\bigg(\frac{D}{n/u}\bigg)=\mathcal{D}_{j_{N,n}}(\div f).
\end{equation*}
\item For $p\nmid N$, we have
\begin{equation*}
\sum\limits_{u|n}u\sqrt{D}c_{p^{r}}(D,u)\bigg(\frac{D}{n/u}\bigg)=\mathcal{D}_{j_{N,n}|T(p^{r})}(\div f).
\end{equation*}
In particular, if $n=1$, we have 
\begin{equation*}
\sqrt{D}c_{p^{r}}(D,1)=\mathcal{D}_{j_{N,p^{r}}}(\div f).
\end{equation*}
\end{enumerate}
\end{thm}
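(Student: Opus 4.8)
The plan is to derive (1) from Theorem~\ref{first}(2) together with the divisor formula, and then to bootstrap (2) out of (1) using Theorem~\ref{exponents} and the self-adjointness of Hecke operators for the Rohrlich-type pairing. For (1), by Theorem~\ref{first}(2) the quantity \eqref{Dindependence} is independent of the positive fundamental discriminant $D$, so I would compute it at $D=1$: there $P_{1}(t)=1-t$ and $\big(\tfrac{1}{m}\big)=1$ for every $m\ge1$, so \eqref{cDn} degenerates to the ordinary infinite product $f=q^{h}\prod_{n\ge1}(1-q^{n})^{c(1,n)}$ and \eqref{recur} degenerates to the classical recursion for the exponents, whence \eqref{Dindependence} equals $\sum_{u\mid n}u\,c(1,u)$. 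This is exactly the combination of infinite-product exponents that the divisor formula \cite[Theorem~5]{BKO} identifies with the Rohrlich-type divisor sum $\mathcal{D}_{j_{N,n}}(\div f)$ --- the normalisation of $j_{N,n}$ as the $s=1$ Laurent coefficient of the Niebur--Poincar\'e series $F_{N,-n}$ being precisely the one that absorbs the sum-of-divisors corrections --- and this finishes (1).

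For (2), the observation is that (1) applies verbatim to any meromorphic modular form on $\G_{0}(N)$ written as in \eqref{cDn}, in particular to $f|\T(p^{r})$. By Theorem~\ref{exponents} together with \eqref{cpr} and \eqref{crs} (iterated in $r$) one has $f|\T(p^{r})=q^{h\beta(p^{r})}\prod_{n\ge1}P_{D}(q^{n})^{c_{p^{r}}(D,n)}$, so part (1) gives $\sum_{u\mid n}u\sqrt{D}\,c_{p^{r}}(D,u)\big(\tfrac{D}{n/u}\big)=\mathcal{D}_{j_{N,n}}\!\big(\div(f|\T(p^{r}))\big)$. It then remains to prove the adjointness identity $\mathcal{D}_{j_{N,n}}\!\big(\div(f|\T(p^{r}))\big)=\mathcal{D}_{j_{N,n}|T(p^{r})}(\div f)$. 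For this I would write $f|\T(p^{r})$, up to a constant, as the product $\prod_{M}f|_{k}M$ over coset representatives $M$ for $\G_{0}(N)\backslash\Delta_{p^{r}}$ (the integral determinant-$p^{r}$ matrices that are upper-triangular modulo $N$), so that $\div(f|\T(p^{r}))=\sum_{M}\div(f|_{k}M)$ with $\ord_{z}(f|_{k}M)=\ord_{Mz}(f)$; since $p\nmid N$ the $M$'s parametrise the cyclic $p^{r}$-isogenies, and summing over them identifies $\div(f|\T(p^{r}))$ on $X_{0}(N)$ with the image of $\div f$ under the $p^{r}$-th Hecke correspondence $T_{p^{r}}$, the multiplicity $h\beta(p^{r})$ at the cusp $\i$ being exactly the cuspidal part of that correspondence. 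The identity then follows from the projection formula for $T_{p^{r}}$, which is self-transpose on $\G_{0}(N)$ for $p\nmid N$ (the diamond operators being trivial), together with the fact that $j_{N,n}|T(p^{r})$ is the pushforward of $j_{N,n}$ through the same correspondence. (Alternatively, the same adjointness can be read off the commutative diagram \eqref{comm}: the logarithmic derivative $\mathcal{D}$ intertwines $\T(p^{r})$ with $T_{2}(p^{r})$, while $\mathcal{D}_{g}(\div h)$ is a residue pairing of $\mathcal{D}h$ against $g$ that converts $T_{2}(p^{r})$ into $T(p^{r})$ on passing from weight $2$ to weight $0$.) The final assertion is the case $n=1$: there the left side is $\sqrt{D}\,c_{p^{r}}(D,1)$ and $j_{N,1}|T(p^{r})=j_{N,p^{r}}$ for $p\nmid N$, so $\sqrt{D}\,c_{p^{r}}(D,1)=\mathcal{D}_{j_{N,p^{r}}}(\div f)$.

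The main obstacle I anticipate is the precise identification $\div(f|\T(p^{r}))=T_{p^{r}}(\div f)$ on the compact curve $X_{0}(N)$: one must track local multiplicities at the elliptic fixed points and, more delicately, at each cusp --- this is where the factor $\beta(p^{r})$ is forced --- and one must check that the Rohrlich-type pairing against the merely polyharmonic (not holomorphic) function $j_{N,n}$ behaves well at the cusps. Everything else should reduce either to a direct application of Theorems~\ref{first} and \ref{exponents} and the divisor formula, or to routine coset bookkeeping for Hecke operators.
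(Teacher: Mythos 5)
Your outline has the same skeleton as the paper's proof, with one genuine difference in how the key adjointness step is handled. For (1) you argue exactly as the paper does: use the $D$-independence of \eqref{Dindependence} from Theorem \ref{first}(2) to reduce to $D=1$, identify $\sum_{u\mid n}u\,c(1,u)$ with (minus) the $q^{n}$-coefficient of $\Theta f/f$, and invoke the divisor formula. One caveat: \cite[Theorem 5]{BKO} is the level-one statement (the paper's remark notes that $N=D=1$ \emph{recovers} it), so for general $N$ you must cite the level-$N$ Rohrlich-type formula for $j_{N,n}$, which is \cite[Corollary 4.5]{JKKMu} in the paper; your parenthetical about the Niebur--Poincar\'e normalisation absorbing the $\sigma(n)$-corrections is exactly the content of that reference, but as written you are quoting a theorem that does not literally cover $\G_{0}(N)$. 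For (2) both you and the paper apply part (1) to $f|\T(p^{r})$ (legitimate by Theorem \ref{exponents} together with \eqref{cpr}--\eqref{crs}) and then need the adjointness $\mathcal{D}_{j_{N,n}}(\div(f|\T(p^{r})))=\mathcal{D}_{j_{N,n}|T(p^{r})}(\div f)$. The paper simply imports this as \cite[Corollary 1.3]{JKKMd} (it is the Hecke equivariance of the divisor map, i.e.\ the pairing identity \eqref{pairing} already quoted in the introduction), whereas you propose to re-derive it via the identification $\div(f|\T(p^{r}))=T_{p^{r}}(\div f)$ on $X_{0}(N)$ and self-transposeness of the Hecke correspondence for $p\nmid N$. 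That route is morally how the cited result is proved, but the delicate points you yourself flag (multiplicities at elliptic points and cusps, the cuspidal contribution responsible for $\beta(p^{r})$, and the behaviour of the merely polyharmonic $j_{N,n}$ in the pairing) are left unresolved in your sketch; since the statement is available in the literature, the cleaner move is to cite it, as the paper does. Your final reduction for $n=1$ via $j_{N,1}|T(p^{r})=j_{N,p^{r}}$ matches the paper.
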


\begin{rmk}
\begin{enumerate}
\item If we put $N=D=1$ in Theorem \ref{Rohrlich}-(1), then the divisor formula is recovered.
\item Recently Jeon, Kang, the first author, and Matsusaka \cite{JKKMd} studied the Hecke equivariance of the divisor map. They regarded the divisor formula as a pairing $(*,*)_{BKO}$ between meromorphic modular forms and modular functions. They proved that a multiplicative Hecke operator is an adjoint operator of the usual Hecke operator with respect to this pairing. That is, they obtained 
\begin{equation}\label{pairing}
(j_{1}|T(n),f)_{BKO}=(j_{1},f|\T(n))_{BKO}.
\end{equation}
We mention that, by using \eqref{cpDn} with $D=1$, one can see that $c_{m}(1)=\sum_{u|m}c(u)$. This relation combined with the divisor formula leads to \eqref{pairing} directly. We also note that $c_{m}(1)=\sum_{u|m}c(u)$ can be derived from Theorem \ref{Rohrlich}. That is, if we put $N=D=1$, $n=m$ in Theorem \ref{Rohrlich}-(1) and $N=D=n=1$, $p^{r}=m$ in Theorem \ref{Rohrlich}-(2), then we get $c_{m}(1)=\sum_{u|m}c(u)$. 
\end{enumerate}
\end{rmk}

\subsection{Twisted traces of singular moduli}

In this subsection we suppose $N$ is a positive integer such that the genus of $\G_{0}(N)$ is zero or one. Furthermore, we set $D$ is a positive fundamental discriminant, $\beta\in\Z$ such that $D\equiv\beta^{2}\pmod{4N}$ and $-d$ is a negative integer congruent to a square modulo $4N$. For a fixed $\beta\pmod{2N}$ such that $\beta^{2}\equiv-d\pmod{4N}$, we define
\begin{equation*}
\cQ_{d,N,\beta}=\{Q=[a,b,c]:a\equiv0\pmod{N},b^{2}-4ac=-d,b\equiv\beta\pmod{2N}\}\subset\cQ_{d,N}.
\end{equation*}
Note that $\G_{0}(N)$ acts on the set $\cQ_{d,N,\beta}$. We now define the \textit{twisted trace of singular moduli} associated with a weakly holomorphic modular function $f$ on $\G_{0}(N)$ by
\begin{equation*}
\Tr_{D,d}(f):=\sum\limits_{Q\in\cQ_{Dd,N}/\G_{0}(N)}\frac{\chi_{D}(Q)}{\omega_{Q}}f(\alpha_{Q}) \;\; \text{ where } \omega_{Q}=|\G_{0}(N)_{Q}/\{\pm I\}|.
\end{equation*}

The congruence of twisted traces of singular moduli for modular functions has been studied by many authors, for instance, see \cite{Ahl,Guerzhoy,JKK}. In this subsection, we add new congruences using the Hecke equivariance of multiplicative Hecke operators.

\begin{thm}\label{twistedtraces}
The following are true:
\begin{enumerate}
\item We have
\begin{equation*}
\frac{1}{\sqrt{D}}Tr_{D,d}(j_{N,n})\in\Z
\end{equation*}
\item For $p\nmid N$, we have
\begin{equation*}
\frac{1}{\sqrt{D}}Tr_{D,d}(j_{N,p})\equiv\bigg(\frac{D}{p}\bigg)\bigg(\frac{1}{\sqrt{D}}Tr_{D,d}(j_{N,1})\bigg)\pmod{p}
\end{equation*}
In particular, if genus of $\G_{0}(N)$ is one and $\Tr_{D,d}(1)=0$ then
\begin{equation*}
\frac{1}{\sqrt{D}}Tr_{D,d}(f_{N,p})\equiv\bigg(\bigg(\frac{D}{p}\bigg)+\alpha_{p}\bigg)Tr_{D,d}(j_{N,1})\pmod{p}
\end{equation*}
where $f_{N,n}=j_{N,n}+\alpha_{n}j_{N,1}$ is the canonical basis element in $M_{0}^{\#}(N)$ ($\alpha_{n}\in\C$). Here $M_{0}^{\#}(N)$ denotes the space of weakly holomorphic modular functions on $\G_{0}(N)$ whose poles are supported only at the cusp $i\i$.
\item For $p\nmid N$, we have
\begin{equation*}
\frac{1}{\sqrt{D}}Tr_{D,d}(j_{N,p^{r}})=\sum\limits_{t=0}^{r}\bigg(\frac{-d}{p}\bigg)^{r-t}\frac{1}{\sqrt{D}}Tr_{D,p^{2t}d}(j_{N,1})
\end{equation*}
\end{enumerate}
\end{thm}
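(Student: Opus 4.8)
The plan is to realize each twisted trace appearing in the statement as a Rohrlich-type divisor sum, and then to feed it into Theorems \ref{first}, \ref{exponents} and \ref{Rohrlich}. For $D$ and $-d$ as in the hypotheses, I would introduce the generalized Borcherds product $\Psi_{D,d}$ attached to the pair $(D,d)$: for $N=1$ this is Zagier's rational function $\mathcal{H}_{D,d}(j(\t))$, and for general $N$ (genus zero or one) it is the meromorphic modular form on $\G_{0}(N)$ produced by \cite[Theorem 6.1]{BO}, normalized to have leading Fourier coefficient $1$, whose divisor on $X_{0}(N)$ is the twisted Heegner divisor $\sum_{Q\in\cQ_{Dd,N}/\G_{0}(N)}\frac{\chi_{D}(Q)}{\omega_{Q}}[\alpha_{Q}]$. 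Straight from the definitions this yields $\Tr_{D,d}(g)=\mathcal{D}_{g}(\div\Psi_{D,d})$ for every $g\colon X_{0}(N)\to\C$. I would also record, from \cite[Theorem 7]{Zagier} and its level-$N$ analogue, that the $P_{D}$-expansion $\Psi_{D,d}(\t)=q^{h}\prod_{n\geq1}P_{D}(q^{n})^{c(D,n)}$ has exponents $c(D,n)=A(n^{2}D,d)$, which are Fourier coefficients of a weakly holomorphic modular form of weight $1/2$ and in particular lie in $\Z$.

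With this in place, parts (1) and (2) are short. For (1), Theorem \ref{Rohrlich}-(1) applied to $f=\Psi_{D,d}$ gives $\Tr_{D,d}(j_{N,n})=\mathcal{D}_{j_{N,n}}(\div\Psi_{D,d})=\sum_{u\mid n}u\sqrt{D}\,c(D,u)\big(\tfrac{D}{n/u}\big)$, so $\tfrac{1}{\sqrt{D}}\Tr_{D,d}(j_{N,n})=\sum_{u\mid n}u\,c(D,u)\big(\tfrac{D}{n/u}\big)\in\Z$. For (2), the ``in particular'' case of Theorem \ref{Rohrlich}-(2) gives $\tfrac{1}{\sqrt{D}}\Tr_{D,d}(j_{N,p})=c_{p}(D,1)$, and \eqref{cpDn} evaluates $c_{p}(D,1)=c(D,1/p)+p\,c(D,p)+\big(\tfrac{D}{p}\big)\chi_{p}(1)c(D,1)=p\,c(D,p)+\big(\tfrac{D}{p}\big)c(D,1)$ since $1/p\notin\Z$; because $c(D,p)\in\Z$, reduction modulo $p$ deletes the term $p\,c(D,p)$, and $c(D,1)=\tfrac{1}{\sqrt{D}}\Tr_{D,d}(j_{N,1})$ by Theorem \ref{Rohrlich}-(1) with $n=1$. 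This is the asserted congruence. The ``in particular'' assertion then follows from the linearity of $g\mapsto\Tr_{D,d}(g)$ applied to $f_{N,p}=j_{N,p}+\alpha_{p}j_{N,1}$, the hypothesis $\Tr_{D,d}(1)=0$ guaranteeing that the twisted Heegner divisor, hence $\Psi_{D,d}$, has degree zero, so that the discussion above applies without change in the genus-one case.

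For part (3) the same identities give $\tfrac{1}{\sqrt{D}}\Tr_{D,d}(j_{N,p^{r}})=c_{p^{r}}(D,1)$, the $n=1$ exponent of $\Psi_{D,d}|\T(p^{r})$, and $\tfrac{1}{\sqrt{D}}\Tr_{D,p^{2t}d}(j_{N,1})=A(D,p^{2t}d)$, the $n=1$ exponent of $\Psi_{D,p^{2t}d}$. I would then unwind $c_{p^{r}}(D,1)$, via the recursions \eqref{cpr} and \eqref{cpDn} of Theorem \ref{exponents}, into an explicit polynomial in the numbers $c(D,p^{j})=A(p^{2j}D,d)$ for $0\leq j\leq r$; the identity to be proved then becomes an identity relating the two-variable coefficients $A(\cdot,\cdot)$ in the $D$-aspect to those in the $d$-aspect. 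These relations are supplied by the Hecke theory on the Kohnen plus space---equivalently by the commutativity of the generalized Borcherds lift with the Hecke operators recorded in \eqref{comm}---together with the Zagier duality between the weight-$\tfrac12$ and weight-$\tfrac32$ coefficients; substituting them and inducting on $r$ should produce the claimed formula, with the cases $p\mid D$ and $p\mid d$ handled separately (there several of the Kronecker symbols vanish and the sum over $t$ collapses). I expect this last step---converting the combinatorial recursion \eqref{cpr} into the precise Hecke relations among the $A(D,d)$, and carrying the induction on $r$ through, including the boundary cases $p\mid D$, $p\mid d$, $p^{2}\mid d$---to be the main obstacle; parts (1) and (2), in contrast, fall out immediately from Theorems \ref{Rohrlich} and \ref{exponents} once the divisor interpretation of the trace is available.
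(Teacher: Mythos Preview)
Your approach is correct and, for parts (1) and (2), coincides with the paper's: introduce the generalized Borcherds product $\Psi_{D,d}$ from \cite[Theorem 6.1]{BO}, read off that its $P_{D}$-exponents $c(D,n)=A^{(N)}(n^{2}D,d)$ are integers, identify $\Tr_{D,d}(g)=\mathcal{D}_{g}(\div\Psi_{D,d})$, and apply Theorem \ref{Rohrlich} together with \eqref{cpDn}.

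For part (3) your reduction to $\frac{1}{\sqrt{D}}\Tr_{D,d}(j_{N,p^{r}})=c_{p^{r}}(D,1)$ is again the paper's, but the paper bypasses the inductive unwinding of \eqref{cpr} that you flag as the ``main obstacle.'' Instead it invokes \cite[Theorem 1.15]{KS} (which is the commutativity \eqref{comm} on the level of exponents) to identify $c_{p^{r}}(D,1)$ directly with the Hecke-image coefficient $A_{p^{r}}^{(N)}(D,d)$, and then cites the ready-made formula \cite[Corollary 4.2]{JKK}
\[
A_{p^{r}}^{(N)}(D,d)=\sum_{t=0}^{r}\Big(\frac{-d}{p}\Big)^{r-t}A^{(N)}(D,p^{2t}d),
\]
whose right-hand side is $\sum_{t=0}^{r}\big(\frac{-d}{p}\big)^{r-t}\frac{1}{\sqrt{D}}\Tr_{D,p^{2t}d}(j_{N,1})$ by part (1) with $n=1$. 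So the Hecke/Zagier-duality work you anticipate doing by hand is already packaged in those two citations, and no induction on $r$ or case splitting on $p\mid D$, $p\mid d$ is needed.
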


\begin{ex}
\begin{enumerate}
\item Let $N=7$, $D=13$, $d=4$, and $p=3$. The Hauptmodul for $\G_{0}(7)$ is given by
\begin{equation*}
f_{7,1}(\t)=\bigg(\frac{\eta(\t)}{\eta(7\t)}\bigg)^{4}+4.
\end{equation*}
In this case, one has $f_{7,3}(\t)=f_{7,1}(\t)^{3}-6f_{7,1}(\t)-24$, 
\begin{align*}
&\cQ_{52,7,2}/\G_{0}(7)=\{[14,2,1],[7,2,2]\}, \;\;  \cQ_{52,7,-2}/\G_{0}(7)=\{[49,12,1],[7,-2,2]\},
\\
&\cQ_{468,7,6}/\G_{0}(7)=\{[126,6,1],[63,6,2],[42,6,3],[21,6,6],[7,6,18],[154,62,7],
\\
&[238,90,9],[77,48,9],[14,6,9],[98,62,11]\} \text{ and } 
\\
&\cQ_{468,7,-6}/\G_{0}(7)=\{[133,8,1],[119,22,2],[147,36,3],[21,-6,6],[63,36,7],
\\
&[7,-6,18],[49,36,9],[14,-6,9],[182,78,9],[266,106,11]\}.
\end{align*}
Note that $j_{7,n}=f_{7,n}+c$ for some constant $c$ and therefore their twisted traces of singular moduli are equal. By using these data we compute
\begin{equation*}
\frac{1}{\sqrt{13}}\Tr_{13,4}(j_{7,3})=8244, \;\;\; \frac{1}{\sqrt{13}}\Tr_{13,4}(j_{7,1})=-6, \;\;\; \frac{1}{\sqrt{13}}\Tr_{13,36}(j_{7,1})=8238,
\end{equation*}
which illustrate Theorem \ref{twistedtraces}-(2) and (3).
\item Let $N=11$, $D=5$, $d=7$, and $p=3$. Note that the genus of $\G_{0}(11)$ is one. In this case, we have $f_{11,3}=\sum_{\g\in\G_{0}(11)/\G}\frac{G_{5}^{4}}{G_{1}^{3}G_{3}}+1$ where $G_{k}$ is generalized Dedekind eta-functions and $\G$ is the intermediate subgroup between $\G_{1}(11)$ and $\G_{0}(11)$ with $[\G_{0}(11):\G]=5$(cf, see the table in \cite[\S 4.1]{Yang}). Furthermore one has $\cQ_{35,11,3}/\G_{0}(11)=\{[11.3.1],[187,47,3]\}$ and $\cQ_{35,11,-3}/\G_{0}(11)=\{[99,19,1],[143,41,3]\}$, and $\alpha_{3}=1$. From these data, we have
\begin{equation*}
\frac{1}{\sqrt{5}}\Tr_{5,7}(f_{11,3})=-120\equiv0\pmod{3}.
\end{equation*}
which illustrate Theorem \ref{twistedtraces}-(2).
\end{enumerate}
\end{ex}

\subsection{Multiplicative Hecke eigenform with nonintegral Fourier coefficients}

The second purpose of this article is to investigate the multiplicative Hecke eigenform $f$ when $f\not\in\Z((q))$. In our previous work \cite[Theorem 1.14]{KS}, we proved that if $f$ is a meromorphic modular form on $\G_{0}(N)$ and either $N$ is squarefree or $f\in\Z((q))$ then the following are equivalent:
\begin{enumerate}[(i)]
\item $f$ is an eta quotient.
\item $f$ is a multiplicative Hecke eigenform.
\item $f$ has no poles or zeros in $\H$.
\end{enumerate}
It is noteworthy that an integer Fourier coefficient condition is necessary. Indeed if $f$ is such a form but its Fourier coefficients are nonintegral, then by the Drinfeld-Manin theorem \cite{Drin,Man} and \cite[Corollary 8]{RW} there exists a modular form that is not an eta-quotient. However, in \cite{KS}, it was proven that if $f$ is an eta quotient then it is a multiplicative Hecke eigenform, and if $f$ is a multiplicaitive Hecke eigenform, then it has no poles or zeros in $\H$. Therefore, a natural question arising from this observation is that if we eliminate the integer Fourier coefficient condition, then which of (\rnum{3})$\Rightarrow$(\rnum{2}) or (\rnum{2})$\Rightarrow$(\rnum{1}) does not hold? Here, we provide an answer to this question.

\begin{thm}\label{counter}
Suppose $N$ is a positive integer such that $p^{2}|N$ for some odd prime $p$. Then there exists a meromorphic modular form on $\G_{0}(N)$ such that it has no poles or zeros in $\H$ but is not a multiplicative Hecke eigenform.
\end{thm}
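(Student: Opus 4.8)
The plan is to build the required form among the weight-$0$ modular units of $\Gamma_0(N)$. The guiding principle is that when $p^2\mid N$ with $p$ odd, the cuspidal divisor group of $X_0(N)$ has, as a module over the Hecke algebra, ``twisted'' pieces on which $T(q)$ does not act by the scalar $\sigma(q)=1+q$, whereas divisors of eta-quotients --- hence divisors of multiplicative Hecke eigenforms --- all lie in the ``untwisted'' part. First I would record a divisor criterion: for $u\in\M(N)$ with $\div u$ supported at the cusps, $u$ is a multiplicative Hecke eigenform (i.e.\ $u|\T(q)=u^{\sigma(q)}$ for all primes $q\nmid N$) if and only if $T(q)(\div u)=\sigma(q)\,\div u$ in $\mathrm{Div}(X_0(N))\otimes\Q$ for all $q\nmid N$. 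The forward implication uses the Hecke-equivariance $\div(u|\T(q))=T(q)(\div u)$: since $u|\T(q)=\mu\prod_{ad=q,\,0\le b<d}u|_k\begin{psmallmatrix}a&b\\0&d\end{psmallmatrix}$ is assembled from translates of $u$ along the Hecke correspondence, its divisor is the image of $\div u$ under $T(q)$; this is the mechanism behind Theorem~\ref{Rohrlich}(2), and it also follows from the intertwining of $\T(q)$ with the weight-$2$ operator $T_2(q)$ through the logarithmic derivative (see \cite{JKK,KS,JKKMd}). The converse holds because $u|\T(q)$ and $u^{\sigma(q)}$ then have equal weight, equal divisor, and leading coefficient $1$, hence coincide.

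Next I would locate the twisted part. Writing the cusps of $\Gamma_0(N)$ as pairs $(c,a)$ with $c\mid N$ and $a\in(\Z/g_c\Z)^{\ast}$, where $g_c=\gcd(c,N/c)$, the operator $T(q)$ for $q\nmid N$ fixes the denominator $c$ and acts on the $\psi$-isotypic line (for $\psi$ a character of $(\Z/g_c\Z)^{\ast}$) inside the span of the cusps with that $c$ by the scalar $\psi(q)+q\,\overline{\psi}(q)$. This is $\sigma(q)$ when $\psi$ is trivial, but for a nontrivial quadratic character $\psi$ it is $\psi(q)(1+q)$, which equals $-(1+q)\ne\sigma(q)$ whenever $\psi(q)=-1$. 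On the other hand, the order of $\eta(d\tau)$ at a cusp $(c,a)$ depends only on $c$ and $\gcd(d,c)$ (Ligozat's formula), so the divisor of every eta-quotient is constant along each denominator class, i.e.\ lies in the trivial-isotypic (untwisted) part. Since $p^2\mid N$ with $p$ odd, the divisor $c=p$ has $g_p=\gcd(p,N/p)=p\ge 3$, so $(\Z/p\Z)^{\ast}$ carries the nontrivial quadratic character $\psi=\bigl(\tfrac{\cdot}{p}\bigr)$.

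I would then form the integral, degree-$0$ cuspidal divisor $v=\sum_{a\in(\Z/p\Z)^{\ast}}\psi(a)\,(p,a)$, which lies in the $\psi$-isotypic component and is nonzero. By the Drinfeld--Manin theorem \cite{Drin,Man}, the cuspidal divisor class group of $X_0(N)$ is finite, so $Mv=\div u$ for some integer $M\ge 1$ and some non-constant modular function $u$ on $\Gamma_0(N)$; normalizing $u$ to have leading Fourier coefficient $1$ puts $u\in\M(N)$, and $u$ has no zeros or poles in $\H$. (That $u$ is not a constant multiple of an eta-quotient --- the phenomenon of \cite[Corollary~8]{RW} --- is immediate, since $\div u=Mv$ meets the untwisted part only in $0$.) Now $T(q)(\div u)=\psi(q)(1+q)\,\div u$ for every $q\nmid N$, and picking a prime $q\nmid N$ that is a quadratic nonresidue modulo $p$ (possible by Dirichlet) gives $T(q)(\div u)=-(q+1)\,\div u\ne\sigma(q)\,\div u$. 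By the criterion of the first step, $u$ is not a multiplicative Hecke eigenform, which proves the theorem. (In particular $u\notin\Z((q))$, for by \cite[Theorem~1.14]{KS} a modular unit with integer Fourier coefficients and divisor supported at the cusps is an eta-quotient, hence an eigenform.)

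The one input requiring genuine care is the description of the Hecke action on the cuspidal divisor group in the second step --- equivalently, the compatibility of $\div(u|\T(q))$ with $T(q)$ at the cusps --- which is classical but must be spelled out, conveniently via the divisor-formula framework of Theorem~\ref{Rohrlich} and \cite{BKO,JKKMd}. It is also exactly here that the hypothesis is used: for squarefree $N$ one has $g_c=1$ for every $c\mid N$, every character that occurs is trivial, every cuspidal divisor is a $\sigma(q)$-eigendivisor, and no such $u$ can exist --- consistent with the equivalences in \cite[Theorem~1.14]{KS}.
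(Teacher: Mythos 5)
Your construction does not prove the theorem, and the problem lies in the very first step, where you \emph{define} ``multiplicative Hecke eigenform'' to mean $u|\T(q)=u^{\sigma(q)}$. In this paper and in \cite{KS} an eigenform is a form $f$ for which each $f|\T(n)$ (for $n$ coprime to $N$) is \emph{some} power of $f$; the exponent is not prescribed. For your modular unit $u$ this distinction is fatal: $u$ has weight $0$ and $v_{\i}(u)=0$, so nothing forces the exponent to be $\sigma(q)$, and in fact your own computation shows that $u$ \emph{is} an eigenform with twisted exponents. Indeed, from $T(q)(\div u)=\psi(q)(1+q)\,\div u$ together with the argument you yourself use for the converse of your criterion (equal weight, equal divisor, leading coefficient $1$, hence the quotient is the constant $1$), one obtains the exact identity $u|\T(q)=u^{\psi(q)(1+q)}$ for every prime $q\nmid N$, and hence $u|\T(n)\in\langle u\rangle$ for all $n$ coprime to $N$. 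A concrete check: on $\G_0(9)$, with $t=\eta(\t)^{3}/\eta(9\t)^{3}$ and $u=\frac{t-t(1/3)}{t-t(2/3)}$ (so $\div u=(1/3)-(2/3)$, the $\psi$-isotypic divisor for $\psi=(\tfrac{\cdot}{3})$), one computes $\div(u|\T(2))=3\,(2/3)-3\,(1/3)$ and therefore $u|\T(2)=u^{-3}$. So placing the divisor in a nontrivial isotypic component produces precisely an eigenform (with eigenvalue $\psi(q)(1+q)$ instead of $\sigma(q)$), which is the opposite of what the theorem asks for; your argument only rules out the single exponent $\sigma(q)$, which is begging the question.

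The paper's proof is calibrated exactly to avoid this trap: it chooses $\div f=n\big((e/p)-(\i)\big)$, which is \emph{not} an eigendivisor of the Hecke action on the cusps, and then shows that $f|\T(\ell)$ (with $\ell=e_{1}\overline{e}$ coprime to $N$) has a zero at the cusp $e_{1}/p$, a point at which $f$ has neither a zero nor a pole; this excludes $f|\T(\ell)=f^{m}$ for every integer $m$, positive, negative, or zero. (Likewise, in Example 1.10 the exponent $3$ is first forced by comparing $v_{\i}$, and only then is $f|\T(2)\ne f^{3}$ checked.) To repair your approach you would need a cuspidal divisor that is not a simultaneous eigendivisor of the $T(q)$ -- for instance one mixing cusps of different denominators, as the paper does, or mixing distinct isotypic components -- and then rule out all powers, not just the $\sigma(q)$-th. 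Your remaining ingredients (Ligozat's formula, Drinfeld--Manin, the action $\psi(q)+q\overline{\psi}(q)$ on the $\psi$-isotypic part, and the equivariance $\div(u|\T(q))=T(q)\div u$ via Theorem \ref{Rohrlich} and \cite{JKKMd}) are correct, but they cannot close this gap as the argument stands.
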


\begin{ex}
Let $N=9$. Then the Hauptmodul for $\G_{0}(9)$ is given by $\eta(\t)^{3}/\eta(9\t)^{3}$. From the transformation rule of the Dedekind eta function (for instance, see \cite[Theorem 5.8.1]{CS}), we have
\begin{equation*}
\alpha:=\lim_{\t\rightarrow i\i}\frac{\eta(\t)^{3}}{\eta(9\t)^{3}}\bigg|_{0}\begin{pmatrix}1&0\\3&1\end{pmatrix}=-\frac{9+3\sqrt{3}i}{2}.
\end{equation*}
It follows that if we put $f(\t):=\eta(\t)^{3}/\eta(9\t)^{3}-\alpha$, then $f$ has a simple pole at $i\i$ and a simple zero at the cusp 1/3. Thus, it has no zeros or poles in $\H$, and clearly has nonintegral Fourier coefficients. However, we see from \eqref{recur} that
\begin{equation*}
f(\t)=q^{-1}\prod\limits_{n=1}^{\i}(1-q^{n})^{c(n)}=q^{-1}(1-q)^{-(3+3\sqrt{3}i)/2}(1-q^{2})^{(-3+6\sqrt{3}i)/2}(1-q^{3})^{(9+\sqrt{3}i)/2}\cdots,
\end{equation*}
and from Theorem \ref{exponents} (or \cite[Theorem 1.2]{KS}), that
\begin{equation*}
f|\T(2)=q^{-3}(1-q)^{(-9+9\sqrt{3}i)/2}(1-q^{2})^{(-9-18\sqrt{3}i)/2}(1-q^{3})^{(27-3\sqrt{3}i)/2}\cdots,
\end{equation*}
which is not equal to $f(\t)^{3}$. Therefore, we conclude that $f$ is not a multiplicative Hecke eigenform.
\end{ex}

\begin{rmk}
Theorem \ref{counter} does not imply that \textup{(\rnum{2})}$\Rightarrow$\textup{(\rnum{1})} is true. Therefore, it would be interesting to determine whether \textup{(\rnum{2})} implies \textup{(\rnum{1})} or not when $f\not\in\Z((q))$.
\end{rmk}

The rest of this article is organized as follows. In Section 2, we prove Theorems \ref{first} and \ref{exponents}. In Section 3, we prove Theorem \ref{Rohrlich}. In section 4, we prove Theorem \ref{twistedtraces}. In section 5, we prove Theorem \ref{counter}.

\section{Proofs of Theorems \ref{first} and \ref{exponents}}

\begin{proof}[Proof of Theorem \ref{first}]
\begin{enumerate}
\item
For convenience, let $F(q):=f(\t)$. Then, the function $qF'(q)/F(q)$ is holomorphic at $q=0$. Write its Taylor expansion as
\begin{equation*}
\frac{qF'(q)}{F(q)}=h-\sum\limits_{n=1}^{\i}\alpha(n)q^{n}.
\end{equation*}
Fix $D>0$. For each $n\geq1$ we define $c(D,n)$ recursively using
\begin{equation*}
\alpha(n)=\sum\limits_{0<m<D}\sum\limits_{d|n}\bigg(\frac{D}{m}\bigg)\zeta_{D}^{m\frac{n}{d}}dc(D,d).
\end{equation*}
We can see that
\begin{align*}
\frac{d}{dq}\log(F(q)q^{-h})&=\frac{F'(q)}{F(q)}-\frac{h}{q}
\\
&=-\sum\limits_{0<m<D}\sum\limits_{n=1}^{\i}\sum\limits_{d|n}\bigg(\frac{D}{m}\bigg)\zeta_{D}^{m\frac{n}{d}}dc(D,d)\frac{d}{dq}\bigg(\frac{q^{n}}{n}\bigg)
\\
&=-\sum\limits_{0<m<D}\sum\limits_{t=1}^{\i}\bigg(\frac{D}{m}\bigg)c(D,t)\frac{d}{dq}\bigg(\sum\limits_{r=1}^{\i}\frac{\zeta_{D}^{mr}q^{tr}}{r}\bigg)
\\
&=\frac{d}{dq}\bigg(\sum\limits_{n=1}^{\i}\sum\limits_{0<m<D}\bigg(\frac{D}{m}\bigg)c(D,n)\log(1-\zeta_{D}^{m}q^{n})\bigg).
\end{align*}
We thus obtain
\begin{equation*}
\log(F(q)q^{-h})=\sum\limits_{n=1}^{\i}\sum\limits_{0<m<D}\bigg(\frac{D}{m}\bigg)c(D,n)\log(1-\zeta_{D}^{m}q^{n}).
\end{equation*}
The values $\sum_{0<m<D}\big(\frac{D}{m}\big)c(D,n)\log(1-\zeta_{D}^{m}q^{n})$ and $\sum_{0<m<D}\log(1-\zeta_{D}^{m}q^{n})^{(\frac{D}{m})c(D,n)}$ differ by integer multiples of $2\pi i$. Since $\big(\frac{D}{m}\big)c(D,n)\log(1-\zeta_{D}^{m}q^{n})\rightarrow0$ as $n\rightarrow\i$, the same is true for $\log(1-\zeta_{D}^{m}q^{n})^{(\frac{D}{m})c(D,n)}$ for fixed $0<m<D$. Hence we see that there exist an integer $M$ such that 
\begin{equation*}
\log(F(q)q^{-h})\sum\limits_{n=1}^{\i}\sum\limits_{0<m<D}\log(1-\zeta_{D}^{m}q^{n})^{(\frac{D}{m})c(D,n)}+2\pi iM.
\end{equation*}
Taking the exponential on both sides yields
\begin{equation*}
f(\t)q^{-h}=F(q)q^{-h}=\prod\limits_{n=1}^{\i}\prod\limits_{0<m<D}(1-\zeta_{D}^{m}q^{n})^{(\frac{D}{m})c(D,n)}.
\end{equation*}
\item
We have
\begin{align*}
f(\t)&=\prod\limits_{n=1}^{\i}\prod\limits_{0<m<D}(1-\zeta_{D}^{m}q^{n})^{(\frac{D}{m})c(D,n)}=\prod\limits_{n=1}^{\i}\prod\limits_{0<m<D}\exp\Big(\log(1-\zeta_{D}^{m}q^{n})^{(\frac{D}{m})c(D,n)}\Big)
\\
&=\exp\bigg(\sum\limits_{n=1}^{\i}\sum\limits_{0<m<D}\bigg(\frac{D}{m}\bigg)c(D,n)\log(1-\zeta_{D}^{m}q^{n})\bigg)
\\
&=\exp\bigg(-\sum\limits_{0<m<D}\sum\limits_{n=1}^{\i}\sum\limits_{r=1}^{\i}\bigg(\frac{D}{m}\bigg)c(D,n)\frac{(\zeta_{D}^{m}q^{n})^{r}}{r}\bigg)
\\
&=\exp\bigg(-\sum\limits_{0<m<D}\sum\limits_{n=1}^{\i}\sum\limits_{d|n}\bigg(\frac{D}{m}\bigg)dc(D,d)\frac{\zeta_{D}^{m\frac{n}{d}}q^{n}}{n}\bigg).
\end{align*}
Let $\Theta:=\frac{1}{2\pi i}\frac{d}{d\t}$. Since $\Theta(\log(f))=\Theta(f)/f$ and $\Theta(\sum_{n=n_{0}}a(n)q^{n})=\sum_{n=n_{0}}na(n)q^{n}$, we deduce that
\begin{equation*}
\bigg(1+\sum\limits_{n=1}^{\i}a(n)q^{n}\bigg)\bigg(\sum\limits_{0<m<D}\sum\limits_{n=1}^{\i}\sum\limits_{d|n}\bigg(\frac{D}{m}\bigg)dc(D,d)\zeta_{D}^{m\frac{n}{d}}q^{n}\bigg)=-\sum\limits_{n=1}^{\i}na(n)q^{n}.
\end{equation*}
Comparing the $q^{n}$ coefficients on both sides we obtain
\begin{equation}\label{nan}
-na(n)=\sum\limits_{0<m<D}\sum\limits_{d|n}\bigg(\frac{D}{m}\bigg)dc(D,d)\zeta_{D}^{m\frac{n}{d}}+\sum\limits_{1\leq u<n}a(n-u)\bigg(\sum\limits_{0<m<D}\sum\limits_{d|u}\bigg(\frac{D}{m}\bigg)dc(D,d)\zeta_{D}^{m\frac{u}{d}}\bigg).
\end{equation}
On the other hand, since $\sqrt{D}(\frac{D}{r})=\sum_{0<m<D}(\frac{D}{m})\zeta_{D}^{mr}$, we have
\begin{equation}\label{nan2}
\sum\limits_{0<m<D}\sum\limits_{d|n}\bigg(\frac{D}{m}\bigg)dc(D,d)\zeta_{D}^{m\frac{n}{d}}=\sum\limits_{d|n}d\sqrt{D}c(D,d)\bigg(\frac{D}{n/d}\bigg).
\end{equation}
Substituting \eqref{nan2} into \eqref{nan} yields that
\begin{equation}\label{nan3}
-na(n)=\sum\limits_{d|n}d\sqrt{D}c(D,d)\bigg(\frac{D}{n/d}\bigg)+\sum\limits_{1\leq u<n}a(n-u)\sum\limits_{d|u}d\sqrt{D}c(D,d)\bigg(\frac{D}{u/d}\bigg).
\end{equation}
Moreover, one can see that 
\begin{equation*}
\sum\limits_{d|n}d\sqrt{D}c(D,d)\bigg(\frac{D}{n/d}\bigg)
\end{equation*}
is independent of $D$ by using induction on $n$ in \eqref{nan3}. This proves the second assertion.
\end{enumerate}
\end{proof}

\begin{proof}[Proof of Theorem \ref{exponents}]
We first assume that $p\nmid N$. From the definition of the multiplicative Hecke operator, we have
\begin{align*}
f|\T(p)&=\varepsilon f(p\t)\prod\limits_{j=0}^{p-1}f\Big(\frac{\t+j}{p}\Big)
\\
&=\varepsilon q^{ph}\prod\limits_{n=1}^{\i}\prod\limits_{0<m<D}(1-\zeta_{D}^{m}q^{pn})^{(\frac{D}{m})c(D,n)}\prod\limits_{j=0}^{p-1}\zeta_{p}^{jh}q^{\frac{h}{p}}\prod\limits_{n=1}^{\i}\prod\limits_{0<m<D}(1-\zeta_{D}^{m}\zeta_{p}^{jn}q^{\frac{n}{p}})^{(\frac{D}{m})c(D,n)}
\\
&=q^{h(p+1)}\prod\limits_{n=1}^{\i}\prod\limits_{0<m<D}(1-\zeta_{D}^{m}q^{pn})^{(\frac{D}{m})c(D,n)}\prod\limits_{\substack{n=1\\p|n}}^{\i}\prod\limits_{0<m<D}(1-\zeta_{D}^{m}q^{\frac{n}{p}})^{(\frac{D}{m})pc(D,n)}
\\
&\times\prod\limits_{\substack{n=1\\(n,p)=1}}^{\i}\prod\limits_{j=0}^{p-1}\prod\limits_{0<m<D}(1-\zeta_{D}^{m}\zeta_{p}^{jn}q^{\frac{n}{p}})^{(\frac{D}{m})c(D,n)}
\\
&=q^{h(p+1)}\prod\limits_{n=1}^{\i}\prod\limits_{0<m<D}(1-\zeta_{D}^{m}q^{pn})^{(\frac{D}{m})c(D,n)}\prod\limits_{n=1}^{\i}\prod\limits_{0<m<D}(1-\zeta_{D}^{m}q^{n})^{(\frac{D}{m})pc(D,pn)}
\\
&\times\prod\limits_{\substack{n=1\\(n,p)=1}}^{\i}\prod\limits_{0<m<D}(1-\zeta_{D}^{pm}q^{n})^{(\frac{D}{m})c(D,n)}.
\end{align*}
The last equality follows from the fact that $\prod_{j=0}^{p-1}(1-\zeta_{p}^{j}X)=1-X^{p}$. Furthermore, we assume that $(p,D)=1$. Then we have
\begin{align*}
f|\T(p)&=q^{h(p+1)}\prod\limits_{n=1}^{\i}\prod\limits_{0<m<D}(1-\zeta_{D}^{m}q^{pn})^{(\frac{D}{m})c(D,n)}\prod\limits_{n=1}^{\i}\prod\limits_{0<m<D}(1-\zeta_{D}^{m}q^{n})^{(\frac{D}{m})pc(D,pn)}
\\
&\times\prod\limits_{\substack{n=1\\(n,p)=1}}^{\i}\prod\limits_{0<m<D}(1-\zeta_{D}^{m}q^{n})^{(\frac{D}{m\bar{p}})c(D,n)}
\\
&=q^{h(p+1)}\prod\limits_{n=1}^{\i}\prod\limits_{0<m<D}(1-\zeta_{D}^{m}q^{n})^{(\frac{D}{m})c(D,\frac{n}{p})+(\frac{D}{m})pc(D,pn)+(\frac{D}{m\bar{p}})\chi_{p}(n)c(D,n)}
\\
&=q^{h(p+1)}\prod\limits_{n=1}^{\i}\prod\limits_{0<m<D}(1-\zeta_{D}^{m}q^{n})^{(\frac{D}{m})\big(c(D,\frac{n}{p})+pc(D,pn)+(\frac{D}{\bar{p}})\chi_{p}(n)c(D,n)\big)}.
\end{align*}
Here $\bar{p}$ denotes the inverse of $p$ modulo $D$. Then the claim follows from the fact that $(\frac{D}{p})=(\frac{D}{\bar{p}})$. Next, we assume that $(p,D)=p$. In this case, we have
\begin{align*}
f|\T(p)&=q^{h(p+1)}\prod\limits_{n=1}^{\i}\prod\limits_{0<m<D}(1-\zeta_{D}^{m}q^{pn})^{(\frac{D}{m})c(D,n)}\prod\limits_{n=1}^{\i}\prod\limits_{0<m<D}(1-\zeta_{D}^{m}q^{n})^{(\frac{D}{m})pc(D,pn)}
\\
&\times\prod\limits_{\substack{n=1\\(n,p)=1}}^{\i}\prod\limits_{\substack{0<m<D\\p|m}}(1-\zeta_{D}^{m}q^{n})^{\big((\frac{D}{m/p})+(\frac{D}{m/p+D/p})+\cdots+(\frac{D}{m/p+(p-1)D/p})\big)c(D,n)}
\\
&=q^{h(p+1)}\prod\limits_{n=1}^{\i}\prod\limits_{0<m<D}(1-\zeta_{D}^{m}q^{n})^{(\frac{D}{m})\big(c(D,\frac{n}{p})+pc(D,pn)\big)}.
\end{align*}
The next lemma shows that the last equality holds. Next, we assume that $p|N$. In this case, $f|\T(p)$ is expressed as
\begin{align*}
f|\T(p)&=\varepsilon\prod\limits_{j=0}^{p-1}f\Big(\frac{\t+j}{p}\Big)=\varepsilon\prod\limits_{j=0}^{p-1}\zeta_{p}^{jh}q^{\frac{h}{p}}\prod\limits_{n=1}^{\i}\prod\limits_{0<m<D}(1-\zeta_{D}^{m}\zeta_{p}^{jn}q^{\frac{n}{p}})^{(\frac{D}{m})c(D,n)}
\\
&=q^{h}\prod\limits_{\substack{n=1\\p|n}}^{\i}\prod\limits_{0<m<D}(1-\zeta_{D}^{m}q^{\frac{n}{p}})^{(\frac{D}{m})pc(D,n)}\prod\limits_{\substack{n=1\\(n,p)=1}}^{\i}\prod\limits_{j=0}^{p-1}\prod\limits_{0<m<D}(1-\zeta_{D}^{m}\zeta_{p}^{jn}q^{\frac{n}{p}})^{(\frac{D}{m})c(D,n)}.
\end{align*}
From the same computation, we deduce that
\begin{equation*}
f|\T(p)=q^{h}\prod\limits_{n=1}^{\i}\prod\limits_{0<m<D}(1-\zeta_{D}^{m}q^{n})^{(\frac{D}{m})\big(pc(D,pn)+(\frac{D}{p})\chi_{p}(n)c(D,n)\big)}.
\end{equation*}
Next, we prove equations \eqref{cpr} and \eqref{crs}. We see from \cite[Theorem 1.7]{KS} that
\begin{equation*}
f|\T(p^{t-1})\T(p)=f|\T(p^{t})\cdot \Big(f|\T(p^{t-2})\Big)^{p}.
\end{equation*}
Therefore, we have
\begin{align*}
c_{p^{t}}(n)&=c_{p^{t-1},p}(n)-pc_{p^{t-2}}(n)
\\
&=c_{p^{t-1}}(D,n/p)+pc_{p^{t-1}}(D,pn)+\bigg(\frac{D}{p}\bigg)\chi_{p}(n)c_{p^{t-1}}(D,n)-pc_{p^{t-2}}(D,n).
\end{align*}
Here, $c_{p^{t-1},p}(n)$ denotes the $n$th exponent of $f|\T(p^{t-1})\T(p)=f|\T(p)\T(p^{t-1})$. The proof of \eqref{crs} is the same as that of \eqref{cpr}.
\end{proof}

\begin{lemma}
Let $D$ be a positive fundamental discriminant. Let $p$ be a prime such that $p|D$ and $m$ be a positive integer such that $p|m$ and $0<m<D$. Then the value
\begin{equation}\label{jacobi}
\Big(\frac{D}{m/p}\Big)+\Big(\frac{D}{m/p+D/p}\Big)+\cdots+\Big(\frac{D}{m/p+(p-1)D/p}\Big)
\end{equation}
is equal to zero.
\end{lemma}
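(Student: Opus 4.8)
The plan is to identify the quantity in \eqref{jacobi} with a Dirichlet character sum over a coset and then annihilate it using primitivity of the Kronecker symbol. The starting point is that, since $D$ is a positive fundamental discriminant, $\chi_{D}(\cdot):=\big(\frac{D}{\cdot}\big)$ is a \emph{primitive} Dirichlet character modulo $D$ and is completely multiplicative.

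First I would set $d:=D/p$ and $m':=m/p$; from $p\mid m$ and $0<m<D$ one gets $m'\in\Z$ with $0<m'<d$, and $p\mid D$ forces $d$ to be a proper divisor of $D$. Since $D=pd$, the $p$ integers $m'+kd$ with $0\le k\le p-1$ are pairwise incongruent modulo $D$, all lie in $[0,D)$, and constitute exactly the residues in $[0,D)$ congruent to $m'$ modulo $d$; hence \eqref{jacobi} equals
\begin{equation*}
S:=\sum_{\substack{0\le n<D\\ n\equiv m'\ (\mathrm{mod}\ d)}}\chi_{D}(n).
\end{equation*}
Next I would apply the primitivity criterion (a character $\chi$ modulo $D$ is primitive iff for every proper divisor $d\mid D$ there is an $a$ with $\gcd(a,D)=1$, $a\equiv1\pmod d$ and $\chi(a)\ne1$): this yields an $a$ with $\gcd(a,D)=1$, $a\equiv1\pmod d$, and $\chi_{D}(a)\ne1$. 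Multiplication by $a$ is a bijection of $\Z/D\Z$, and because $a\equiv1\pmod d$ it sends each $n\equiv m'\pmod d$ to $an\equiv m'\pmod d$, so it restricts to a bijection of the coset indexing $S$. Reindexing $S$ by $n\mapsto an$ and using complete multiplicativity of $\chi_{D}$ gives $S=\chi_{D}(a)S$, whence $(1-\chi_{D}(a))S=0$ and therefore $S=0$, which is the assertion.

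The only genuine ingredient is the primitivity of $\chi_{D}$ (equivalently, that the conductor of a fundamental discriminant equals its absolute value); the remainder is routine bookkeeping with residue classes. The step I would treat most carefully is verifying that multiplication by the chosen $a$ really does permute the coset $\{n\bmod D:n\equiv m'\pmod d\}$ — this is precisely where the congruence $a\equiv1\pmod d$ is needed, and it is what legitimizes the reindexing that collapses $S$ to zero.
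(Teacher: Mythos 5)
Your argument is correct, and it takes a genuinely different route from the paper. You reduce the sum to $S=\sum_{0\le n<D,\ n\equiv m/p \ (\mathrm{mod}\ D/p)}\big(\tfrac{D}{n}\big)$ (the indexing step is right: since $0<m/p<D/p$, the arguments $m/p+k(D/p)$, $0\le k\le p-1$, are exactly the residues in $[0,D)$ lying in that class), and then kill $S$ by the standard primitivity trick: choose $a\equiv1\ (\mathrm{mod}\ D/p)$, $\gcd(a,D)=1$, with $\big(\tfrac{D}{a}\big)\ne1$, note that multiplication by $a$ permutes the coset, and conclude $S=\big(\tfrac{D}{a}\big)S=0$. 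This is a uniform, case-free argument whose only substantive input is that the Kronecker symbol $\big(\tfrac{D}{\cdot}\big)$ attached to a fundamental discriminant $D>0$ is a primitive (conductor $D$) completely multiplicative character mod $D$ — a standard fact, though one you are quoting rather than proving. The paper instead argues by hand: it splits into the cases $D\equiv1\ (\mathrm{mod}\ 4)$ squarefree, $D=4l$ with $p=2$ and $l\equiv3$ or $2\ (\mathrm{mod}\ 4)$, and $D=4l$ with $p>2$, uses reciprocity to flip the symbol, factors it through the coprime pieces of $D$, and finishes with $\sum_{i\ (\mathrm{mod}\ p)}\big(\tfrac{i}{p}\big)=0$ or a sign cancellation modulo $4$ or $8$. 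Your proof is shorter, avoids the delicate bookkeeping with $p=2$ and the $\pm$ signs, and actually proves the more general statement that a primitive character mod $q$ sums to zero over any congruence class modulo a proper divisor of $q$; the paper's proof is more elementary and self-contained in that it only uses reciprocity-type manipulations of the Kronecker symbol rather than the conductor characterization. Both are valid; just make sure, if you use your version, to cite the primitivity of $\big(\tfrac{D}{\cdot}\big)$ for fundamental $D$ explicitly.
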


\begin{proof}
For convenience we let $D':=D/p$. We first assume that $D$ is congruent to 1 modulo 4 and squarefree. In this case, we have
\begin{equation*}
\Big(\frac{D}{m/p+iD/p}\Big)=\Big(\frac{m/p+iD/p}{D}\Big)=\Big(\frac{m/p+iD'}{p}\Big)\Big(\frac{m/p+iD'}{D'}\Big)
\end{equation*}
for all $0\leq i<p$. Since $p$ is copirme to $D'$, the set $\{m/p+iD':0\leq i<p\}$ forms a complete residue system modulo $p$. Therefore, we have
\begin{equation*}
\sum\limits_{i=0}^{p-1}\Big(\frac{D}{m/p+iD/p}\Big)=\Big(\frac{m/p}{D'}\Big)\sum\limits_{i=0}^{p-1}\Big(\frac{i}{p}\Big)=0.
\end{equation*}
Next, we assume that $D=4l$ where $l\equiv3\pmod{4}$, $l$ is squarefree and $p=2$. Since $D'\equiv2\pmod{4}$, we have
\begin{equation*}
\Big(\frac{D}{m/2}\Big)+\Big(\frac{D}{m/2+D'}\Big)=\pm\bigg(\Big(\frac{m/2}{D}\Big)-\Big(\frac{m/2+D'}{D}\Big)\bigg)=\pm\Big(\frac{m/2}{D'/2}\Big)\bigg(\Big(\frac{m/2}{4}\Big)-\Big(\frac{m/2+D'}{4}\Big)\bigg)=0.
\end{equation*}
Similarly we assume that $D=4l$ where $l\equiv2\pmod{4}$, $l$ is squarefree and $p=2$. Since $D'\equiv0\pmod{4}$, we have
\begin{equation*}
\Big(\frac{D}{m/2}\Big)+\Big(\frac{D}{m/2+D'}\Big)=\pm\bigg(\Big(\frac{m/2}{D}\Big)+\Big(\frac{m/2+D'}{D}\Big)\bigg)=\pm\Big(\frac{m/2}{D'/4}\Big)\bigg(\Big(\frac{m/2}{8}\Big)+\Big(\frac{m/2+D'}{8}\Big)\bigg)=0.
\end{equation*}
The last equality follows from the congruence $D'\equiv4\pmod{8}$. Finally we assume that $D=4l$ where $l\equiv2,3\pmod{4}$, $l$ is squarefree and $p>2$. In this case, $m/p+iD/p$ is congruent modulo 4 for all $0\leq i<p$, since $D'\equiv0\pmod{4}$. Thus, \eqref{jacobi} is
\begin{equation*}
\sum\limits_{i=0}^{p-1}\Big(\frac{D}{m/p+iD'}\Big)=\pm\sum\limits_{i=0}^{p-1}\Big(\frac{m/p+iD'}{D}\Big)=\pm\Big(\frac{m/p}{D'}\Big)\sum\limits_{i=0}^{p-1}\Big(\frac{m/p+iD'}{p}\Big)=0
\end{equation*}
since $p$ is coprime to $D'$. 
\end{proof}

\section{Proof of Theorem \ref{Rohrlich}}

\begin{proof}
We first note that
\begin{align*}
\sum\limits_{u|n}u\sqrt{D}c(D,u)\bigg(\frac{D}{n/u}\bigg)\overset{\eqref{Dindependence}}{=}&\sum\limits_{u|n}uc(1,u)=-{\rm Coeff}_{q^{n}}\bigg(\frac{\Theta f}{f}\bigg)
\\
\overset{\text{\cite[Corollary 4.5]{JKKMu}}}{=}&\sum\limits_{z\in\G_{0}(N)\backslash\H}\frac{\ord_{z}(f)}{\omega_{z}}j_{N,n}(z)+\sum\limits_{\rho\in C_{N}}\ord_{\rho}(f)j_{N,n}(\rho).
\end{align*}
Here, ${\rm Coeff}_{q^{n}}(f)$ denotes the $n$th Fourier coefficient of $f(\t)$.
Therefore, by the definition of Rohrlich-type divisor sum, we have
\begin{align*}
\sum\limits_{u|n}u\sqrt{D}c(D,u)\bigg(\frac{D}{n/u}\bigg)=\mathcal{D}_{j_{N,n}}(\div f).
\end{align*}
This is the proof of the first assertion. Next, for $p\nmid N$, according to  \cite[Corollary 1.3]{JKKMd}, we obtain 
\begin{align*}
\mathcal{D}_{j_{N,n}|T(p^{r})}(\div f)=\mathcal{D}_{j_{N,n}}(\div(f|\T(p^{r})).
\end{align*}
Therefore, by the first assertion, we obtain
\begin{equation*}
\sum\limits_{u|n}u\sqrt{D}c_{p^{r}}(D,u)\bigg(\frac{D}{n/u}\bigg)=\mathcal{D}_{j_{N,n}|T(p^{r})}(\div f).
\end{equation*}
In particular, if $n=1$, we obtain
\begin{equation*}
\sqrt{D}c_{p^{r}}(D,1)=\mathcal{D}_{j_{N,p^{r}}}(\div f).
\end{equation*}
\end{proof}

\section{Proof of Theorem \ref{twistedtraces}}

\begin{proof}
Suppose that the genus of $\G_{0}^{*}(N)$ is zero. Let $f:=\Psi_{D}(f_{N,d})=\mathcal{H}_{D,d,N}(j_{N,1})$. Then, by \cite[Theorem 6.1]{BO}, we have $c(D,n)=A^{(N)}(n^{2}D,d)\in\Z$ where $A^{(N)}(D,d)$ denotes the level $N$ analogue of $A(D,d)$. Note that we have $\Tr_{D,d}(f)=\mathcal{D}_{j_{N,n}}(\div f)$. Thus, by Theorem \ref{Rohrlich}, we immediately obtain $\frac{1}{\sqrt{D}}\Tr_{D,d}(j_{N,n})=\sum_{u|n}uc(D,u)\big(\frac{D}{n/u}\big)\in\Z$. Next, we suppose $p\nmid N$. By \cite[Theorem 1.1 and 1.2]{JKKMd}, we see that
\begin{equation}\label{Trcongruence}
\Tr_{D,d}(j_{N,n}|T(p^{r}))=\sum\limits_{u|n}u\sqrt{D}c_{p^{r}}(D,u)\bigg(\frac{D}{n/u}\bigg).
\end{equation}
If $r=n=1$, then it reduces to $\Tr_{D,d}(j_{N,1}|T(p))=\sqrt{D}c_{p^{r}}(D,1)$. Since the trace of singular moduli of the constant function is zero, we see that the left-hand side is equal to $\Tr_{D,d}(j_{N,p})$. On the other hand, by \eqref{cpDn}, the right-hand side can be expressed as $\sqrt{D}(pc(D,p)+(\frac{D}{p})c(D,1))$. Hence, we get 
\begin{equation*}
\frac{1}{\sqrt{D}}\Tr_{D,d}(j_{N,p})\equiv\bigg(\frac{D}{p}\bigg)\frac{1}{\sqrt{D}}\Tr_{D,d}(j_{N,1})\pmod{p}.
\end{equation*}
In particular, if the genus of $\G_{0}(N)$ is one, then we can write $f_{N,m}=j_{N,m}+\alpha_{m}j_{N,1}$ for some constant $\alpha_{m}$. Therefore, by adding $\frac{1}{\sqrt{D}}\Tr_{D,d}(\alpha_{p}j_{N,1})$ to both sides, we obtain 
\begin{equation*}
\frac{1}{\sqrt{D}}\Tr_{D,d}(f_{N,p})\equiv\bigg(\bigg(\frac{D}{p}\bigg)+\alpha_{p}\bigg)\Tr_{D,d}(j_{N,1})\pmod{p}.
\end{equation*}
Finally, we prove the last assertion. Let $A_{m}^{(N)}(D,d)$ denote the coefficient of $q^{D}$ of $f_{N,d}|T(m)$. Then by \cite[Corollary 4.2]{JKK}, we have
\begin{equation*}
A_{p^{r}}^{(N)}(D,d)=\sum\limits_{t=0}^{r}\bigg(\frac{-d}{p}\bigg)^{r-t}A^{(N)}(D,p^{2t}d).
\end{equation*}
On the other hand, by \eqref{Trcongruence} and \cite[Theorem 1.15]{KS}, we have $\frac{1}{\sqrt{D}}\Tr_{D,d}(j_{N,p^{r}})=c_{p^{r}}(D,1)=A_{p^{r}}^{(N)}(D,d)$. Since $A^{(N)}(D,p^{2t}d)=\frac{1}{\sqrt{D}}\Tr_{D,p^{2t}d}(j_{N,1})$, we obtain the desired result.
\end{proof}

\section{Proof of Theorem \ref{counter}}

\begin{proof}
Let $N$ be non-squarefree and $p$ be an odd prime such that $p^{2}|N$. By the Drinfeld-Manin theorem \cite{Drin,Man}, the finiteness of the rational cuspidal subgroup of the Jacobian $J_{0}(N)$ of the modular curve $X_{0}(N)$ implies that there exists a modular function $f$ whose divisor is $n\big((e/p)-(\i)\big)$ where $e/p$ is the cusp of $\G_{0}(N)$ and $n\in\Z_{>0}$. Note that by \cite[Corollary 8]{RW}, if such a function satisfies $f\in\Z((q))$, then it should be an eta-quotient. Hence, we have a contradiction because of the criterion for determining the vanishing orders of an eta-quotient at the cusps(see, \cite{Ligozat} or \cite[Proposition 5.9.3]{CS}). Therefore, we conclude that such a function $f$ has nonintegral Fourier coefficiets. By replacing $e$ with $e+pm$ for some integer $m$, we may assume that $e$ is coprime to $N$ (this is possible because $(e,p)=1$). By replacing $f$ with $f\Delta^{n}$, where $\Delta(\t)=\eta^{24}(\t)$, we may assume that $f$ has zeros only at the cusp $e/p$ and has no poles elsewhere. Note that since $\Delta$ is a multiplicative Hecke eigenform (see \cite[Theorem 1.13]{KS}), $f\Delta^{n}$ is still a multiplicative Hecke eigenform if $f$ is a multiplicative Hecke eigenform. Let $e_{1}/p$ be a cusp of $\G_{0}(N)$ that is $\G_{0}(N)$-inequivalent to the cusp $e/p$ (it exists because $p^{2}|N$). Similar to the case of taking $e/p$, we may assume that $e_{1}$ is coprime to $N$. Take $\ell=e_{1}\overline{e}\in\Z$ where $\overline{e}$ is the inverse of $e$ modulo $N$. From the definition of the multiplicative Hecke operator, we see that $\ell e/p$ is a zero of $f|\T(\ell)$. Then, cusp $\ell e/p$ is $\G_{0}(N)$-equivalent to cusp $e_{1}/p$ if and only if
\begin{equation*}
y\ell e\equiv e_{1}+jp\pmod{N} \;\; \text{ and } \;\; p\equiv yp\pmod{N}
\end{equation*}
has integer solutions $y$ and $j$ with $(y,N)=1$ (see \cite[Proposition 3.8.3]{DS}). Clearly, we take $y=1$ and the first congruence equation is simplified as
\begin{equation*}
jp\equiv \ell e-e_{1}=0\pmod{N}.
\end{equation*}
Therefore, we have a solution $j=0$, which implies that $f|\T(\ell)$ has zero at the cusp $e_{1}/p$. Thus, $f$ is not a multiplicative Hecke eigenform.
\end{proof}

\bibliographystyle{abbrv}
\bibliography{twisted}

\end{document}